\def\reals{\mathbb{R}}
\def\uball{\mathbb{B}}
\def\ereals{\overline{\mathbb{R}}}
\def\comp{\raise 1pt \hbox{$\scriptstyle\circ$}}
\def\argmin{\mathop{\rm argmin}\limits}
\def\minimize{\mathop{\rm minimize}\limits}
\def\maximize{\mathop{\rm maximize}\limits}
\def\st{\mathop{\rm subject\ to}}
\def\dom{\mathop{\rm dom}\nolimits}
\def\ovr{\mathop{\rm over}}
\def\upto{{\raise 1pt \hbox{$\scriptstyle \,\nearrow\,$}}}
\def\downto{{\raise 1pt \hbox{$\scriptstyle \,\searrow\,$}}}
\def\tos{\rightrightarrows}
\def\FF{(\F_t)_{t=0}^T}
\def\B{{\cal B}}
\def\F{{\cal F}}
\def\G{{\cal G}}
\def\N{{\cal N}}
\newtheorem{theorem}{Theorem}
\newtheorem{lemma}[theorem]{Lemma}
\newtheorem{example}{Example}
\newtheorem{remark}{Remark}
\theoremstyle{definition}
\newtheorem{assumption}{Assumption}
\title{Non-convex dynamic programming and optimal investment}
\author{Teemu Pennanen\thanks{Department of Mathematics, King's College London,
Strand, London, WC2R 2LS, United Kingdom} \and 
Ari-Pekka Perkki\"o\thanks{Department of Mathematics, Technische Universit\"at Berlin, Building MA,
Str. des 17. Juni 136, 10623 Berlin, Germany} \and 
Mikl\'os R\'asonyi\thanks{MTA Alfr\'ed
R\'enyi Institute of Mathematics, Re\'altanoda utca 13--15, 1053 Budapest, Hungary and 
P\'azm\'any P\'eter Catholic University, Budapest}}
\begin{document}

\maketitle

\begin{abstract}
We establish the existence of minimizers in a rather general setting of dynamic stochastic optimization without assuming either convexity or coercivity of the objective function. We apply this to prove the existence of optimal portfolios for non-concave utility maximization problems in financial market models with frictions (such as illiquidity), a first result of its kind. The proofs are based on the dynamic programming principle whose validity is established under quite general assumptions.
\end{abstract}

\noindent\textbf{Keywords.} non-convex optimization; dynamic programming; non-concave utility functions;
financial markets with frictions

\section{Introduction}

We study stochastic optimization problems in finite discrete time. The novelty is that we prove the validity
of the dynamic programming principle and the existence of optimal strategies in
cases where the objective function fails to be convex or coercive (Theorem \ref{thm:dp}). Our main result
extends the existence result of \cite{evs76} by relaxing the compactness assumption and that of 
\cite{pp12} by relaxing the assumption of convexity.

Our motivation comes mostly from mathematical finance. In standard optimal investment problems an agent tries to maximise her expected utility over available investment strategies. Utility functions are assumed concave in the overwhelming majority of the economics literature, starting already with \cite{bernoulli}. This feature is usually justified by the \emph{risk aversion} of the agents; see e.g.\ \cite{kre88} or \cite[Chapter~2]{fs}. However, the alternative theory of \cite{tk,kt} considered so-called ``$S$-shaped'' utilities (which are convex up to a certain point and concave beyond it). They also argued that investors distort objective probabilities in their decision-making procedures. 

There has been growing interest in non-concave utilities recently. Due to the mathematical difficulties, 
however, continuous-time studies focussed on the (rather unrealistic) case of complete markets 
where every contingent claim can be replicated; see \cite{cp,bkp,jz,cd11,cd,reichlin,rr}. In discrete time frictionless models also incomplete markets have been treated: one-step models were investigated in \cite{hz,bg} and multistep ones in \cite{cr,nc,miki}. All these papers assumed frictionless financial markets. 

According to our knowledge, all existing results on optimal investment under frictions (transaction costs, illiquidity effects, etc.) assume a concave utility function; see e.g.\ \cite{gura,pen14,cs14} and the references therein. 
In \cite{gura} a general, continuous-time existence result was obtained 
under the assumption that trading 
costs are superlinear functions of the trading speed. In the analogous discrete-time multiperiod setting, 
Theorem \ref{marquis} below provides an existence result for optimal investment in illiquid markets and with not necessarily concave utilities. To the best of our knowledge, this is the first result involving non-concave utilities in markets with frictions.

Sections \ref{sec:dp} and \ref{sec:exi} establish the existence of an optimizer in a general framework 
(Theorem \ref{thm:dp}) and provide easily verifiable sufficient conditions 
(Lemma \ref{lem:lin} and Theorem \ref{thm:exist}). Sections \ref{sec:amf} and \ref{sec:alm} apply 
these results to prove the existence of an optimal portfolio in models of financial markets with or without friction (Theorem \ref{marquis}).

\section{Dynamic programming}\label{sec:dp}

Let $(\Omega,\F,\FF,P)$ be a complete filtered probability space and let $h$ be a {\em $\F$-normal integrand} on $\reals^n\times\Omega$, i.e.\ an extended real-valued $\B(\reals^n)\otimes\F$-measurable function such that $h(\cdot,\omega)$ is lower semicontinuous (lsc) for all $\omega\in\Omega$; see \cite[Chapter~14]{rw98}. A normal integrand maybe interpreted as a ``random lsc function''. Accordingly, properties of normal integrands are interpreted in the $P$-almost sure sense. For example, a normal integrand $h$ is convex, positively homogeneous, positive on a set $C\subseteq\reals^n$, \ldots if there is an $A\in\F$ with $P(A)=1$ such that $h(\cdot,\omega)$ is convex, positively homogeneous, positive on $C$, \ldots for all $\omega\in A$. This is consistent with the convention of interpreting inequalities etc.\ for random variables in the $P$-almost sure sense. Indeed, random variables may be viewed as normal integrands which do not depend on $x$. 

For a $\sigma$-algebra $\mathcal{G}\subseteq\mathcal{F}$ we denote by $L^0(\Omega,\mathcal{G},P;\mathbb{R}^{d})$ the set of $\mathcal{G}$-measurable $\mathbb{R}^d$-valued random variables, $L^0(\mathcal{G})$ is a shorthand notation for $L^0(\Omega,\mathcal{G},P;\mathbb{R}^d)$ with $d$ being clear from the context, $L^1(\Omega,\mathcal{G},P)$ denotes the set of integrable $\mathbb{R}$-valued random variables. 

We will study the dynamic stochastic optimization problem
\begin{equation}\label{p}\tag{$P$}
\minimize\quad Eh(x) := \int h(x(\omega),\omega)dP(\omega)\quad\text{over $x\in\N$},
\end{equation}
where $\N:=\{(x_t)_{t=0}^T\,|\, x_t\in L^0(\Omega,\F_t,P;\reals^{n_t})$ for given integers $n_t$ such that $n_0+\cdots n_T=n$. We assume throughout the article that there is an $m\in L^1(\Omega,\F,P)$ such that $h\ge m$.

Given a sub-$\sigma$-algebra $\G\subseteq\F$, the {\em conditional expectation} 
$E^\G h$ of $h$ is a $\G$-normal integrand such that
\[
(E^\G h)(x) = E^\G h(x)\quad\forall x\in L^0(\Omega,\G,P;\reals^n).
\]

The next lemma follows from \cite[Corollary 2.2]{chs3}.
\begin{lemma}[\cite{chs3}]\label{lem:ce}
Let $\G\subseteq\F$ be a sigma-algebra. Then $h$ has a well-defined conditional normal integrand $E^\G h$ that is bounded from below by $E^\G m$.
\end{lemma}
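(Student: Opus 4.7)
The plan is to invoke \cite[Corollary 2.2]{chs3} directly for the existence of $E^\G h$ as a $\G$-normal integrand satisfying $(E^\G h)(x)=E^\G[h(x)]$ for every $x\in L^0(\Omega,\G,P;\reals^n)$, and then to obtain the claimed lower bound by a Castaing-type density argument. The uniform bound $h\ge m$ with $m\in L^1$ ensures that $h(x,\cdot)-m$ is a nonnegative, $\F$-measurable function for every such $x$, so its conditional expectation is well defined in $[0,+\infty]$ and hence $E^\G[h(x)]=E^\G[h(x)-m]+E^\G m$ is well defined in $\ereals$; this provides the integrability prerequisite needed by the cited corollary.

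For the lower bound, fix a set $A\in\F$ of full measure on which $h(x,\omega)\ge m(\omega)$ for every $x\in\reals^n$, and pick any enumeration $\{q_k\}_{k\in\naturals}$ of $\mathbb{Q}^n$. Each $q_k$, viewed as a constant $\G$-measurable map, satisfies $h(q_k,\cdot)\ge m$ a.s., so taking conditional expectation and using the defining property of $E^\G h$ yields a $\G$-measurable $P$-full set $A_k$ on which $(E^\G h)(q_k,\omega)\ge(E^\G m)(\omega)$. Let $A'=\bigcap_k A_k\in\G$; then $P(A')=1$ and $(E^\G h)(q_k,\omega)\ge(E^\G m)(\omega)$ for all $k$ and all $\omega\in A'$. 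Since $x\mapsto(E^\G h)(x,\omega)$ is lsc and $\{q_k\}$ is dense in $\reals^n$, taking $\liminf$ over the rationals extends the inequality to every $x\in\reals^n$ on $A'$, which is the claim.

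The main obstacle is already shouldered by the cited construction, which produces a jointly $\B(\reals^n)\otimes\G$-measurable, fiberwise lsc function whose action on $\G$-measurable selections coincides a.s.\ with ordinary conditional expectation; this is the delicate step and requires an epigraphical Castaing representation together with the interchange of conditional expectation and infimum. Given this result, the only care needed here is to pick a single $\G$-measurable full-measure set that serves the countable dense family $\{q_k\}$ simultaneously, which is immediate by taking a countable intersection, after which lsc does the rest.
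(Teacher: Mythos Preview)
The paper gives no proof; it simply records that the lemma follows from \cite[Corollary~2.2]{chs3}. Your invocation of that corollary for the existence of $E^\G h$ is therefore exactly the paper's approach.

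Your separate argument for the lower bound, however, contains a genuine gap: lower semicontinuity does \emph{not} propagate a pointwise \emph{lower} bound from a dense set to the whole space. If $f$ is lsc and $f(q)\ge c$ for every rational $q$, lsc only gives $f(x)\le\liminf_{q\to x}f(q)$, and the right-hand side being $\ge c$ says nothing about $f(x)$. A one-line counterexample on $\reals$ is $f=-1_{\{\pi\}}$, which is lsc, vanishes on $\mathbb{Q}$, yet $f(\pi)=-1$. (It is \emph{upper} bounds that extend from dense sets under lsc.) A correct route is via measurable selection rather than density: the $\G$-measurable set-valued mapping $\omega\mapsto\{x:(E^\G h)(x,\omega)<(E^\G m)(\omega)\}$ has open values (by lower semicontinuity of $(E^\G h)(\cdot,\omega)$), so if it were nonempty on a non-null set it would admit a $\G$-measurable selection $x$; inserting this $x$ into the defining identity $(E^\G h)(x)=E^\G[h(x)]\ge E^\G m$ then yields a contradiction. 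Equivalently, apply the cited corollary to the nonnegative normal integrand $h-m$ and add $E^\G m$ back.
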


We will use the notation $E_t=E^{\F_t}$ and $x^t=(x_0,\ldots,x_t)$ and define extended real-valued functions $h_t,\tilde h_t:\reals^{n_1+\dots+n_t}\times\Omega\rightarrow\ereals$ recursively for $t=T,\ldots,0$ by
\begin{equation}\label{dp}
\begin{split}
\tilde h_T&=h,\\
h_t &= E_t\tilde h_t,\\
\tilde h_{t-1}(x^{t-1},\omega)&=\inf_{x_t\in\reals^{n_t}}h_t(x^{t-1},x_t,\omega).
\end{split}
\end{equation}
In order to guarantee that the above recursion is well defined and 
that optimal solutions exist, we will need to impose appropriate growth conditions on the functions $h_t$. 
Like in \cite{pp12}, our conditions are given in terms of the recession functions of $h_t$. Here, however, 
we are dealing with nonconvex functions so we will use the notion of a horizon function from \cite{rw98} which 
extends the notion of a recession function to the nonconvex case.

We now recall some terminology from \cite{rw98}. A function $g:\reals^n\to\ereals$ is \emph{proper} if it does not take on the value $-\infty$ and it is not identically $+\infty$. The set $\dom g:=\{x\in\reals^n\,|\,g(x)<\infty\}$ is called the {\em effective domain} of $g$. The {\em horizon function} of $g$ is the positively homogeneous function defined by
\[
g^\infty(w) := \lim_{\delta\searrow 0}\inf_{\stackrel{\lambda\in(0,\delta)}{w'\in\uball(w,\delta)}}\lambda 
g(w'/\lambda),
\]
where $\uball(w,\delta)$ denotes the closed ball of radius $\delta$ around $w$.

In some important situations, the horizon function may be expressed as
\begin{equation}\label{w}
g^\infty(w) = \liminf_{\alpha\upto\infty}\frac{g(\alpha w+\bar w)}{\alpha}
\end{equation}
for some $\bar w\in\reals^n$.

{Given a set $C$, we denote by $\delta_C$ the {\em indicator function} of $C$, i.e. 
$\delta_C(x)=0$ if $x\in C$ and $\delta_C(x)=+\infty$ otherwise.} 
\begin{example}\label{ex:horizon}
If $g$ is proper convex lsc function, then, by \cite[Theorem~3.21]{rw98},
\[
g^\infty(w) = \lim_{\alpha\upto\infty}\frac{g(\alpha w+\bar w)}{\alpha} = \sup_{\alpha>0}\frac{g(\alpha w+\bar w)-g(\bar w)}{\alpha}
\]
for any $\bar w\in\dom g$. Expression \eqref{w} holds also for proper lsc functions on the real line with any $\bar w\in\reals$. Indeed, for $w>0$ (analogously for $w<0$), we see from the definition that $g^\infty(w)=(g+\delta_{\reals_+})^\infty(w)$, so the positive homogeneity of $g^\infty$ and the expression in \cite[Theorem~3.26]{rw98} give
\[
g^\infty(w)=wg^\infty(1)= w\liminf_{\alpha\upto\infty}\frac{g(\alpha)}{\alpha}=\liminf_{\alpha\upto\infty}\frac{g(\alpha w)}{\alpha}.
\]
Applying this to the function $g_{\bar w}(w):=g(w+\bar w)$ and using the fact that $g_{\bar w}^\infty=g^\infty$ (see \cite[p.~89]{rw98}) proves the claim.
%
%
\end{example}

For proper convex lsc functions, one has $(g_1+g_2)^\infty=g_1^\infty+g_2^\infty$ whenever $\dom g_1\cap\dom g_2\ne\emptyset$. More generally, we have the following.

\begin{lemma}\label{lem:rec2}
Let $g_1$ and $g_2$ be proper lsc functions with proper horizon functions. Then $(g_1+g_2)^\infty\ge g_1^\infty+g_2^\infty$. If $g_1$ is convex and $g_2$ is satisfies \eqref{w} with some $\bar w\in\dom g_1\cap\dom g_2$, then
\[
(g_1+g_2)^\infty=g_1^\infty+g_2^\infty
\]
and $g_1+g_2$ satisfies \eqref{w} with the same $\bar w$.
\end{lemma}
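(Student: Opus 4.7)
The plan is to treat the two inequalities separately: the bound $(g_1+g_2)^\infty \ge g_1^\infty + g_2^\infty$ comes straight from the definition of the horizon function, while the matching reverse inequality exploits the simpler asymptotic-ratio formula \eqref{w}.

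For the first inequality, I would start from
$(g_1+g_2)^\infty(w) = \lim_{\delta \searrow 0}\inf_{\substack{\lambda\in(0,\delta)\\w'\in\uball(w,\delta)}}\lambda\bigl[g_1(w'/\lambda)+g_2(w'/\lambda)\bigr]$
and use that, over any common set, the infimum of a sum dominates the sum of infima, then pass to $\delta\searrow 0$. Properness of $g_1^\infty$ and $g_2^\infty$ ensures no $+\infty-\infty$ ambiguity in the limit.

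For the reverse inequality under the additional hypotheses, my first step is a general one-sided bound valid for every proper lsc $g$ and every $\bar w\in\reals^n$:
$g^\infty(w)\le\liminf_{\alpha\upto\infty}\frac{g(\alpha w+\bar w)}{\alpha}$,
obtained by substituting $\lambda=1/\alpha$ and $w'=w+\bar w/\alpha$ into the defining infimum: for large $\alpha$ this pair lies in $(0,\delta)\times\uball(w,\delta)$ and yields exactly $\lambda g(w'/\lambda)=g(\alpha w+\bar w)/\alpha$. Applying this to $g_1+g_2$, and combining with Example \ref{ex:horizon} for the convex $g_1$ at $\bar w\in\dom g_1$ (which upgrades the liminf to a genuine limit) and with the standing hypothesis \eqref{w} on $g_2$, I get
$(g_1+g_2)^\infty(w)\le\lim_\alpha\frac{g_1(\alpha w+\bar w)}{\alpha}+\liminf_\alpha\frac{g_2(\alpha w+\bar w)}{\alpha}=g_1^\infty(w)+g_2^\infty(w)$.
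Together with the opposite inequality from the previous paragraph this forces equality, and the fact that all intermediate expressions coincide simultaneously delivers \eqref{w} for $g_1+g_2$ with the same $\bar w$.

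The only genuinely delicate point is the extended-real arithmetic needed to split the $\liminf$ of the sum as $\lim$ of the $g_1$-term plus $\liminf$ of the $g_2$-term. This is justified because $\lim_\alpha g_1(\alpha w+\bar w)/\alpha=g_1^\infty(w)$ exists in $(-\infty,+\infty]$ by Example \ref{ex:horizon}, while $\liminf_\alpha g_2(\alpha w+\bar w)/\alpha=g_2^\infty(w)>-\infty$ by properness; in the edge case $g_1^\infty(w)=+\infty$, the $g_2$ term is eventually bounded below, so the sum tends to $+\infty$ and the splitting still holds. Convexity of $g_1$ enters the argument only here, to turn a generic $\liminf$ into a genuine $\lim$; nothing is assumed about $g_2$ beyond \eqref{w}.
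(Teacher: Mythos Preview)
Your proposal is correct and follows essentially the same route as the paper: the lower bound comes from $\inf(a+b)\ge\inf a+\inf b$ applied inside the horizon definition, and the upper bound comes from restricting the infimum to the ray $w'=w+\bar w/\alpha$ and then separating the $g_1$- and $g_2$-contributions via convexity and \eqref{w}, respectively. The only cosmetic difference is that the paper first shifts to $\bar w=0$, $g_1(0)=0$ and bounds the $g_1$-term by $\sup_{\lambda>0}\lambda g_1(w/\lambda)$, which avoids the extended-real $\liminf$-splitting discussion you carry out; both executions are equally valid.
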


\begin{proof}
We always have
\begin{align*}
(g_1+g_2)^\infty(w) &= \lim_{\delta\searrow 0}\inf_{\stackrel{\lambda\in(0,\delta)}{w'\in\uball(w,\delta)}}[\lambda g_1(w'/\lambda) + \lambda g_2(w'/\lambda)]\\
&\ge\lim_{\delta\searrow 0}\left[\inf_{\stackrel{\lambda\in(0,\delta)}{w'\in\uball(w,\delta)}}\lambda g_1(w'/\lambda) + \inf_{\stackrel{\lambda\in(0,\delta)}{w'\in\uball(w,\delta)}}\lambda g_2(w'/\lambda)\right]\\
&=g_1^\infty(w)+g_2^\infty(w).
\end{align*}
By shifting the functions if necessary, we may assume that $\bar w=0$ and $g_1(0)=0$; see \cite[p.~89]{rw98}. Then, under the additional assumptions,
\begin{align*}
(g_1+g_2)^\infty(w) &\le \lim_{\delta\searrow 0}\inf_{\lambda\in(0,\delta)}[\lambda g_1(w/\lambda) + \lambda g_2(w/\lambda)]\\
&\le \sup_{\lambda>0}\lambda g_1(w/\lambda) + \lim_{\delta\searrow 0}\inf_{\lambda\in(0,\delta)}\lambda g_2(w/\lambda)\\
&=g_1^\infty(w) + g_2^\infty(w),
\end{align*}
where the last equation follows from convexity of $g_1$; see Example~\ref{ex:horizon}. The above also shows that $g_1+g_2$ satisfies 
\eqref{w} with $\bar{w}=0$.
\end{proof}

By \cite[Exercise 14.54]{rw98}, the function $h^\infty$ defined by $h^\infty(\cdot,\omega)$ is a normal integrand.




\begin{lemma}\label{lem:ip}
Assume that $h_t^\infty(0,x_t)>0$ for all $x_t\ne 0$. Then $\tilde h_{t-1}$ is a normal integrand and
\[
\tilde h_{t-1}^\infty(x^{t-1},\omega)=\inf_{x_t} h_t^\infty(x^{t-1},x_t,\omega).
\]
Moreover, given an $x\in\N$, there is an $\F_t$-measurable $\bar x_t$ such that
\[
\tilde h_{t-1}(x^{t-1}(\omega),\omega) = h_t(x^{t-1}(\omega),\bar x_t(\omega),\omega).
\]
\end{lemma}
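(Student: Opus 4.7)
The plan is to interpret the hypothesis $h_t^\infty(0, x_t) > 0$ for all $x_t \neq 0$ as a level-boundedness condition on $h_t$ in the $x_t$ variable, locally uniformly in $x^{t-1}$, and then invoke standard partial-minimization results for normal integrands from \cite{rw98}.

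First I would observe that by Lemma~\ref{lem:ce}, $h_t \ge E_t m > -\infty$ a.s., so $h_t(\cdot,\omega)$ is a proper lsc function on $\reals^{n_1+\cdots+n_t}$ for $P$-a.e.\ $\omega$. Combined with the assumption $h_t^\infty(0,x_t,\omega) > 0$ for $x_t\ne 0$, \cite[Theorem~3.31]{rw98} yields that $h_t(\cdot,\omega)$ is level-bounded in $x_t$ locally uniformly in $x^{t-1}$ for a.e.\ $\omega$ and that $\tilde h_{t-1}(\cdot,\omega) = \inf_{x_t} h_t(\cdot,x_t,\omega)$ is proper lsc with the infimum attained on a nonempty compact set.

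Next, to promote this to a measurable statement, I would apply the partial-minimization theorem for normal integrands \cite[Theorem~14.37]{rw98}: since $h_t$ is an $\F_t$-normal integrand that is level-bounded in $x_t$ locally uniformly in $x^{t-1}$, the infimum $\tilde h_{t-1}$ is itself an $\F_t$-normal integrand, and the correspondence $\omega \mapsto \argmin_{x_t} h_t(x^{t-1}(\omega), x_t, \omega)$ is nonempty-, compact-valued and $\F_t$-measurable; a measurable selector $\bar x_t$ then exists by the Kuratowski--Ryll-Nardzewski selection theorem.

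For the horizon-function identity, the inequality $\tilde h_{t-1}^\infty(w) \leq \inf_{x_t} h_t^\infty(w, x_t)$ is immediate from the definition by fixing $x_t$ in the liminf. For the reverse inequality, I would pick approximating sequences $\lambda_k \searrow 0$, $w'_k \to w$ and, by the attainment just established, $x_{t,k}$ with $\lambda_k h_t(w'_k/\lambda_k, x_{t,k}) \to \tilde h_{t-1}^\infty(w)$; the local level-boundedness then forces $u_k := \lambda_k x_{t,k}$ to stay in a bounded set, so passing to a cluster point $u$ and using the definition of $h_t^\infty$ at $(w, u)$ gives $\tilde h_{t-1}^\infty(w) \geq h_t^\infty(w, u) \geq \inf_{x_t} h_t^\infty(w, x_t)$. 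The main obstacle is justifying this boundedness of $u_k$: once it is secured, both the partial-minimization statements and the horizon-function identity follow from the standard machinery.
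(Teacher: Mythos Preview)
Your approach is essentially the paper's: both invoke \cite[Theorem~3.31]{rw98} to pass from the horizon hypothesis to level-boundedness in $x_t$ locally uniformly in $x^{t-1}$, and then Chapter~14 of \cite{rw98} for the measurability claims. Two minor points: the horizon-function identity you argue by hand is already part of the conclusion of \cite[Theorem~3.31]{rw98} (the boundedness of $u_k=\lambda_k x_{t,k}$ you flag as the ``main obstacle'' is exactly what that theorem proves from the hypothesis $h_t^\infty(0,x_t)>0$), so your sequential argument is redundant; and on the measurable side the paper splits the work into \cite[Proposition~14.45(c)]{rw98} (so that $p(x_t,\omega):=h_t(x^{t-1}(\omega),x_t,\omega)$ is an $\F_t$-normal integrand), \cite[Theorem~14.37]{rw98} (measurable selector $\bar x_t$), and \cite[Proposition~14.47]{rw98} ($\tilde h_{t-1}$ a normal integrand), whereas you attribute all of this to 14.37---the latter gives only the argmin measurability, so you still need 14.47 (or an equivalent projection result) for the normal-integrand property of $\tilde h_{t-1}$.
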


\begin{proof}
By \cite[Theorem~3.31]{rw98}, the horizon condition implies that $h_t$ is level-bounded 
locally uniformly in $x^{t-1}$ and that the expression for the horizon function is valid. 
By \cite[Theorem~1.17]{rw98}, the infimum in the definition of 
$\tilde h_{t-1}$ is attained. By \cite[Proposition~14.45(c)]{rw98}, 
the function $p(x,\omega):=h_t(x^{t-1}(\omega),x,\omega)$ is an $\F_t$-measurable 
normal integrand so, by \cite[Theorem~14.37]{rw98}, the minimizer $\bar x_t$ can 
be chosen $\F_t$-measurable. By \cite[Proposition~14.47]{rw98}, $\tilde h_{t-1}$ is a normal integrand.
\end{proof}

With the help of the lemmas above, the following theorem is proved analogously to \cite[Theorem~1]{pp12}. 

\begin{theorem}\label{thm:dp}
Assume that $h_t^\infty(0,x_t)>0$ for all $x_t\ne 0$ whenever $h_t$ is well-defined. Then $h_t$ is well-defined for all $t=T,\ldots,0$ and
\begin{equation}\label{ie}
Eh_t(x^t)\ge \inf\eqref{p}\quad t=0,\ldots,T \quad\forall x\in\N.
\end{equation}
Optimal solutions $x\in\N$ exist and they are characterized by the condition
\[
x_t(\omega)\in\argmin_{x_t}h_t(x^{t-1}(\omega),x_t,\omega)\quad P\text{-a.s.}\quad t=0,\ldots,T,
\]
which is equivalent to having equalities in \eqref{ie}.
\end{theorem}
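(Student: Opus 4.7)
The plan is to proceed by backward induction on $t$, mirroring the argument of \cite[Theorem~1]{pp12}. First I would establish that the recursion \eqref{dp} is well-defined at every stage, together with an integrable lower bound on each $h_t$ and $\tilde h_t$. Starting from $\tilde h_T=h\ge m$, Lemma~\ref{lem:ce} produces $h_T=E_T\tilde h_T$ as an $\F_T$-normal integrand bounded below by $E_T m$. Assuming inductively that $h_t$ is an $\F_t$-normal integrand with integrable lower bound, the horizon hypothesis $h_t^\infty(0,x_t)>0$ for $x_t\ne 0$ lets me invoke Lemma~\ref{lem:ip} to conclude that $\tilde h_{t-1}$ is a normal integrand whose infimum is attained by an $\F_t$-measurable selector, and a second application of Lemma~\ref{lem:ce} then produces $h_{t-1}=E_{t-1}\tilde h_{t-1}$ with its own integrable lower bound.

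The core identity driving the rest of the argument is the monotone chain
\[
Eh_0(x_0)\le Eh_1(x^1)\le\cdots\le Eh_T(x^T)=Eh(x)\quad\forall\,x\in\N,
\]
which I would extract from the tower property $Eh_t(x^t)=E\tilde h_t(x^t)$ for $\F_t$-measurable $x^t$ combined with the pointwise inequality $\tilde h_t(x^t)\le h_{t+1}(x^{t+1})$. To prove the lower bound \eqref{ie}, I would extend any adapted $x^t$ to a full strategy $(x^t,\bar x_{t+1},\ldots,\bar x_T)\in\N$ via the measurable selectors supplied by Lemma~\ref{lem:ip}; because each $\bar x_s$ is a pointwise minimizer, every inequality after stage $t$ in the chain becomes an equality, and so $Eh_t(x^t)=Eh(x^t,\bar x_{t+1},\ldots,\bar x_T)\ge\inf\eqref{p}$.

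To exhibit an optimizer I would construct $\bar x\in\N$ forward in time: first $\bar x_0\in\argmin_{x_0}h_0(x_0)$, which exists as an $\F_0$-measurable selector thanks to the horizon hypothesis on $h_0$ and the measurable-maximum theorem \cite[Theorem~14.37]{rw98}, and then inductively $\bar x_t\in\argmin_{x_t}h_t(\bar x^{t-1},x_t)$ as an $\F_t$-measurable minimizer supplied by Lemma~\ref{lem:ip}. The same chain gives $Eh(\bar x)=Eh_0(\bar x_0)\le Eh_0(x_0)\le Eh(x)$ for every $x\in\N$, which combined with \eqref{ie} forces $Eh(\bar x)=\inf\eqref{p}$, so $\bar x$ is optimal. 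The characterization follows by noting that optimality of $x$ forces all inequalities in the monotone chain to be equalities; since each random variable $h_{t+1}(x^{t+1})-\tilde h_t(x^t)$ is non-negative, the equality $E\tilde h_t(x^t)=Eh_{t+1}(x^{t+1})$ is equivalent to $x_{t+1}(\omega)\in\argmin_{x_{t+1}}h_{t+1}(x^t(\omega),x_{t+1},\omega)$ almost surely, with the analogous statement at $t=0$.

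The main obstacle I foresee is the careful propagation of integrability through the recursion: since $h_t$ and $\tilde h_t$ may take the value $+\infty$, I must verify at each induction step that the lower bound from Lemma~\ref{lem:ce} survives the pointwise minimization, so that the subsequent conditional expectations and the tower identities are legitimate. Once the normal-integrand and measurable-selection machinery of Lemmas~\ref{lem:ce} and~\ref{lem:ip} is in place, the remainder is essentially formal bookkeeping on the monotone chain.
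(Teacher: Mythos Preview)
Your proposal is correct and follows essentially the same route as the paper: recursive application of Lemmas~\ref{lem:ce} and~\ref{lem:ip} to set up the dynamic programming recursion, the monotone chain $Eh_t(x^t)\ge E\tilde h_{t-1}(x^{t-1})=Eh_{t-1}(x^{t-1})$, and forward construction of an optimizer via the measurable selectors of Lemma~\ref{lem:ip}. The only cosmetic difference is that you establish \eqref{ie} by extending $x^t$ forward to a full strategy, whereas the paper first identifies $E\inf_{x_0}h_0(x_0,\cdot)$ with $\inf\eqref{p}$ and then reads off \eqref{ie} from the chain; both are equivalent bookkeeping.
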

\begin{proof} 
By recursive application of Lemmas~\ref{lem:ce} and~\ref{lem:ip}, $h_t$ and $\tilde h_t$ are well-defined normal integrands. For $x\in\N$, we have
\[
Eh_t(x^t(\omega),\omega) \ge E\tilde h_{t-1}(x^{t-1}(\omega),\omega) = Eh_{t-1}(x^{t-1}(\omega),\omega)\quad t=1,\ldots,T.
\]
Thus,
\[
Eh(x(\omega),\omega) = Eh_T(x^T(\omega),\omega) \ge Eh_0(x^0(\omega),\omega) \ge E\inf_{x_0\in\reals^{n_0}}h_0(x_0,\omega),
\]
where the inequalities hold as equalities if and only if
\[
h_t(x^t(\omega),\omega)=\tilde h_{t-1}(x^{t-1}(\omega),\omega)\quad P\text{-a.s.}\quad t=0,\ldots,T.
\]
The existence of such an $x\in\N$ follows by applying Lemma~\ref{lem:ip} recursively for $t=0,\ldots,T$.
\end{proof}

The above result is closely related to \cite{evs76} where it was assumed that the sets $\{x\in\reals^n\,|\, h(x,\omega)\le\alpha\}$ are compact for every $\omega\in\Omega$ and $\alpha\in\reals$. In Theorem~\ref{thm:dp}, this has been substituted by the assumption on the horizon functions, which is equivalent to the sets $\{x_t\in\reals^{n_t}\,|\,h_t(x_t,\omega)\le\alpha\}$ being compact; see \cite[Theorem~3.26]{rw98}. As we will see in the following sections, our assumption allows for reformulations that turn into well known no-arbitrage conditions in models of financial economics.


The following lemma gives a sufficient condition for the growth condition in Theorem~\ref{thm:dp}.

\begin{lemma}\label{lem:lin}
We have $h^{\infty}_t(0,x_t)>0$ for all $x_t\neq 0$ provided that
\[
\{x\in \N \mid h^\infty(x)\le 0\}=\{0\}.
\]
\end{lemma}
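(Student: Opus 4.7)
The plan is to argue by contraposition combined with a backward induction on $t$. Suppose that for some $t$ the conclusion fails, so that on a set $A$ of positive probability there is a nonzero $x_t^\star\in\reals^{n_t}$ with $h_t^\infty(0,x_t^\star)\le 0$. Since $h_t^\infty(0,\cdot)$ is a normal integrand, standard measurable selection (\cite[Theorem~14.37]{rw98}) lets us take $x_t^\star$ to be $\F_t$-measurable and $A\in\F_t$. The goal is to extend $x_t^\star$ forward in time to a process $x\in\N$ (padded with zeros before $t$) that is nonzero yet satisfies $h^\infty(x)\le 0$, contradicting the standing hypothesis.

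Two preparatory observations are crucial. First, because $h\ge m$ for an integrable $m$ independent of $x$ and $m^\infty\equiv 0$, we have $h^\infty\ge 0$ and $h^\infty(0)=0$ a.s.; Lemma~\ref{lem:ce} propagates the same kind of integrable lower bound to every stage of the recursion, so $\tilde h_s^\infty,h_s^\infty\ge 0$ for every $s$. Second, for a $\G$-measurable $w$ and a normal integrand $f$ bounded below by an integrable function, I claim the key inequality
\[
(E^\G f)^\infty(w)\ \ge\ E^\G\!\left[f^\infty(w)\right].
\]
This will follow by writing $f^\infty$ as the limit (monotone in $\delta\searrow 0$) of the infima $\inf_{\lambda\in(0,\delta),w'\in\uball(w,\delta)}\lambda f(w'/\lambda,\cdot)$, applying the elementary bound $E^\G[\inf(\cdot)]\le\inf E^\G[(\cdot)]$, and then invoking conditional monotone convergence.

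Granted these tools, the extension runs as a forward recursion. Applied with $f=\tilde h_t$ and $\G=\F_t$, the key inequality yields $E_t[\tilde h_t^\infty(0,x_t^\star)]\le h_t^\infty(0,x_t^\star)\le 0$ on $A$, which together with $\tilde h_t^\infty\ge 0$ forces $\tilde h_t^\infty(0,x_t^\star)=0$ almost surely on $A$. Because the growth condition holds at $t+1$ by the outer backward induction, Lemma~\ref{lem:ip} identifies this value with $\inf_{x_{t+1}}h_{t+1}^\infty(0,x_t^\star,x_{t+1})$ and returns an $\F_{t+1}$-measurable minimizer $x_{t+1}^\star$ satisfying $h_{t+1}^\infty(0,x_t^\star,x_{t+1}^\star)\le 0$ on $A$. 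Iterating the pair of moves---key inequality, then Lemma~\ref{lem:ip}---for $s=t+1,\ldots,T$ (using $\tilde h_T=h$ at the last step) produces $\F_s$-measurable $x_s^\star$ with $h^\infty(0,\ldots,0,x_t^\star,\ldots,x_T^\star)\le 0$ on $A$. Setting $x_s:=0$ for $s<t$ and $x_s:=x_s^\star 1_A$ for $s\ge t$ gives an adapted process (since $A\in\F_t\subseteq\F_s$), nonzero on $A$, with $h^\infty(x)\le 0$ almost surely (using $h^\infty(0)=0$ off $A$), contradicting the hypothesis.

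The step I expect to require the most care is the key inequality when $w$ is merely $\G$-measurable rather than deterministic: the ball $\uball(w(\omega),\delta)$ depends on $\omega$, so the measurable manipulation of selections $w'$ inside the conditional expectation and the exchange of the $\delta$-limit with $E^\G$ must be handled within the normal-integrand framework. I expect this to follow from the machinery behind \cite{chs3} already invoked for Lemma~\ref{lem:ce}; everything else is a direct application of Lemmas~\ref{lem:ce} and~\ref{lem:ip}.
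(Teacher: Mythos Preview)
Your proposal is correct and follows essentially the same route as the paper. The ``key inequality'' $(E^\G f)^\infty(w)\ge E^\G[f^\infty(w)]$ that you identify as the crux is precisely what the paper isolates and proves as Lemma~\ref{lem:rce} (immediately after Lemma~\ref{lem:lin}), and your forward extension via Lemma~\ref{lem:ip} mirrors the paper's chain of inclusions; the only cosmetic difference is that the paper organizes the argument as induction on the horizon $T$ rather than backward induction on $t$, and it proves the key inequality by passing through the perspective function $\hat h(\lambda,x,\omega)=\lambda h(x/\lambda,\omega)$ together with the interchange rule \cite[Theorem~14.60]{rw98}, rather than the direct monotone-convergence sketch you give.
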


\begin{proof}
We proceed by induction on $T$. Assume first that the claim holds for the $(T-1)$-period model. 
Applying Lemmas~\ref{lem:ce} and \ref{lem:ip} backwards for $s=T,\ldots,1$, 
we then see that $h_0$ is well defined. Lemmas~\ref{lem:ip} and \ref{lem:rce} give
\begin{align}
&\{x_0\in L^0(\F_0)\,|\, h_0^\infty(x_0(\omega),\omega)\le 0\text{ a.s.}\}\nonumber\\
&\subseteq \{x_0\in L^0(\F_0)\,|\, \tilde h_0^\infty(x_0(\omega),\omega)\le 0\text{ a.s.}\}\nonumber\\
&= \{x_0\in L^0(\F_0)\,|\, \inf_{x_1}h_1^\infty(x_0(\omega),x_1,\omega)\le 0\text{ a.s.}\}\nonumber\\
&= \{x_0\in L^0(\F_0)\,|\, \exists\tilde x\in\N:\ \tilde x_0=x_0,\ h_1^\infty(\tilde x^1(\omega),\omega)\le 0\text{ a.s.}\}\nonumber,
\end{align}
where the last equality follows by applying the last part of Lemma~\ref{lem:ip} to the normal integrand $h^\infty$. Repeating the argument for $t=1,\ldots,T$, we get
\begin{multline}
\{x_0\in L^0(\F_0)\,|\, h_0^\infty(x_0(\omega),\omega)\le 0\text{ a.s.}\}\\
\subseteq \{x_0\in L^0(\F_0)\,|\, \exists\tilde x\in\N: \tilde x_0=x_0,\ h^\infty(\tilde x(\omega),\omega)\le 0 \text{ a.s.}\}=\{0\}\label{mamma}
\end{multline}
Thus $h^\infty_0(x_0)>0$ almost surely for every $x_0\neq 0$, since otherwise there would be a nonzero $x\in L ^0(\F_0)$ with $h^\infty_0(x)\le 0$; this contradicts \eqref{mamma}. For a one-period model, the claim is proved similarly. The same argument with $x_t$ and $\F_t$ in lieu of $x_0$ and $\F_0$ allows us to conclude.
\end{proof}

The following lemma was used in the proof of Lemma~\ref{lem:lin}. 

\begin{lemma}\label{lem:rce} Let $h$ be a normal integrand that is bounded from below by $m\in L^1$. We have $E^\G h^\infty\le(E^\G h)^\infty$ and 
\[
\{x\in L^0(\G)\,|\, (E^\G h)^\infty(x)\le 0\}\subseteq \{x\in L^0(\G)\,|\, h^\infty(x)\le 0\}.
\]
\end{lemma}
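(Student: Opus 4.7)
The statement has two parts, and I would read the inequality $E^\G h^\infty \le (E^\G h)^\infty$ as holding after evaluation at any $\G$-measurable argument, since this is the form actually used in Lemma~\ref{lem:lin}. A key preliminary observation is that $h \ge m \in L^1$ forces $\lambda h(w'/\lambda,\omega) \ge \lambda m(\omega) \to 0$ as $\lambda \searrow 0$, so by the very definition $h^\infty(w,\omega) \ge 0$ a.s.\ for every $w$. Hence the set inclusion will drop out of the first inequality: if $x \in L^0(\G)$ satisfies $(E^\G h)^\infty(x) \le 0$ a.s., then the inequality forces $E^\G[h^\infty(x,\cdot)] \le 0$, and combining with $h^\infty(x,\cdot) \ge 0$ collapses this to $h^\infty(x,\cdot) = 0$ a.s.

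The main work is to approximate both horizon functions by the level-$\delta$ infima
\[
\phi_\delta^g(w,\omega) := \inf_{\lambda \in (0,\delta),\, w' \in \uball(w,\delta)} \lambda g(w'/\lambda,\omega) \qquad (g \in \{h, E^\G h\}),
\]
which are themselves normal integrands, monotone increase to $g^\infty$ as $\delta \searrow 0$, and are bounded below by $-\delta|m|$ (respectively $-\delta E^\G|m|$), giving a uniform integrable lower bound for $\delta \le 1$.

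Fix a $\G$-measurable $w$ and $\delta \in (0,1)$. Since $E^\G h$ is a $\G$-normal integrand (Lemma~\ref{lem:ce}), the function $(\lambda,w',\omega) \mapsto \lambda (E^\G h)(w'/\lambda,\omega)$ is a joint $\G$-normal integrand on $(0,\delta)\times\uball(w(\omega),\delta)\times\Omega$ by \cite[Proposition~14.45(c)]{rw98}, and \cite[Theorem~14.37]{rw98} yields $\G$-measurable approximate minimizers $\lambda_k:\Omega\to(0,\delta)$ and $w_k':\Omega\to\reals^n$ with $w_k'(\omega)\in\uball(w(\omega),\delta)$ and
\[
\lambda_k(\omega)(E^\G h)(w_k'(\omega)/\lambda_k(\omega),\omega) \to \phi_\delta^{E^\G h}(w(\omega),\omega) \quad \text{a.s.}
\]
Since each $(\lambda_k(\omega),w_k'(\omega))$ lies in the set defining $\phi_\delta^h$ at $w(\omega)$, one has $\phi_\delta^h(w,\cdot) \le \lambda_k h(w_k'/\lambda_k,\cdot)$ a.s. Taking $E^\G$ and using that $\lambda_k, w_k'$ are $\G$-measurable (so the defining property of $E^\G h$ gives $\lambda_k (E^\G h)(w_k'/\lambda_k,\cdot) = E^\G[\lambda_k h(w_k'/\lambda_k,\cdot)]$), one obtains $E^\G \phi_\delta^h(w,\cdot) \le \lambda_k(E^\G h)(w_k'/\lambda_k,\cdot)$, and passing to $\liminf$ in $k$ produces the level-$\delta$ inequality $E^\G \phi_\delta^h(w,\cdot) \le \phi_\delta^{E^\G h}(w,\cdot)$ a.s.

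Letting $\delta \searrow 0$, the uniform integrable lower bound licenses conditional monotone convergence to give $E^\G \phi_\delta^h(w,\cdot) \nearrow E^\G h^\infty(w,\cdot)$, while the right-hand sides increase pointwise to $(E^\G h)^\infty(w(\omega),\omega)$, establishing the desired inequality and hence, as noted, the set inclusion. The delicate step is producing the $\G$-measurable approximate minimizers at level $\delta$ — it is there that the joint normal-integrand structure and the measurable selection theorem are essential; everything else is standard manipulation of monotone limits and conditional expectations.
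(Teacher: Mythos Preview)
Your argument is correct and follows essentially the same route as the paper: both approximate the horizon by the monotone family of level-$\delta$ infima, push the infimum inside the (conditional) expectation via measurable selection, and pass to the limit with monotone convergence; the deduction of the set inclusion from $h^\infty\ge 0$ is identical. The paper packages the selection step slightly differently---it introduces the lsc ``perspective'' integrand $\hat h(\lambda,x,\omega)$ (equal to $\lambda h(x/\lambda,\omega)$ for $\lambda>0$ and to $h^\infty(x,\omega)$ for $\lambda=0$) so as to work on the \emph{closed} set $[0,\delta]\times\uball(\bar x,\delta)$, and then invokes the interchange rule \cite[Theorem~14.60]{rw98} together with integration against $1_A$, $A\in\G$, rather than constructing explicit $\G$-measurable $\varepsilon$-minimizers. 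This is mainly cosmetic, but it does sidestep the minor technical wrinkle in your write-up: the map $(\lambda,w')\mapsto\lambda(E^\G h)(w'/\lambda)$ extended by $+\infty$ off the \emph{open} set $(0,\delta)\times\uball(w,\delta)$ is not lsc and hence not literally a normal integrand, so Proposition~14.45(c) and Theorem~14.37 of \cite{rw98} are not the right citations. The fix is easy (take approximate minimizers over $[\varepsilon,\delta]\times\uball(w,\delta)$ and let $\varepsilon\searrow 0$, or argue via a general measurable-selection theorem on the nonempty $1/k$-argmin graph), but the paper's $\hat h$ device avoids the issue altogether.
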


\begin{proof} 
The function
\[
\hat h(\lambda,x,\omega)= \begin{cases}
\lambda h(x/\lambda,\omega)\quad &\text{if }\lambda>0\\
h^\infty (x,\omega)\quad &\text{if }\lambda=0\\
+\infty\quad&\text{otherwise}
\end{cases}
\]
is clearly $\B(\reals\times\reals^n)\times\F$-measurable, and it is lower semicontinuous w.r.t. $(\lambda,x)$; this can be deduced as in \cite[Exercise 3.49]{rw98}. Thus $\hat h$ is a normal integrand and, by construction,
\[
h^\infty(x,\omega)=\lim_{\delta\searrow 0} \inf_{\lambda\in[0,\delta],x\in\uball(\bar x,\delta)} \hat h(\lambda,x,\omega).
\]
Let $\bar x\in L^0(\G)$ and $A\in\G$. We have that
\begin{align*}
E[1_A h^\infty(\bar x)]&=E[1_A\lim_{\delta\searrow 0} \inf_{\lambda\in[0,\delta],x\in\uball(\bar x,\delta)}\hat h(\lambda,x)]\\
&=\lim_{\delta\searrow 0} E[1_A\inf_{\lambda\in[0,\delta],x\in\uball(\bar x,\delta)}\hat h(\lambda,x)]\\
&\le\lim_{\delta\searrow 0} \inf_{\lambda\in L^0(\G;[0,\delta]),x\in L^0(\G;\uball(\bar x,\delta))}E[1_A\hat h(\lambda,x)]\\
&=\lim_{\delta\searrow 0} \inf_{\lambda\in L^0(\G;[0,\delta]),x\in L^0(\G;\uball(\bar x,\delta))}E[1_A(E^\G \hat h)(\lambda,x)]\\
&=\lim_{\delta\searrow 0} E[1_A\inf_{\lambda\in[0,\delta]),x\in\uball(\bar x,\delta)} (E^\G \hat h)(\lambda,x)]\\
&\le \lim_{\delta\searrow 0} E[1_A\inf_{\lambda\in(0,\delta)),x\in\uball(\bar x,\delta)} \lambda (E^\G h)(x/\lambda)]\\
&=E[1_A(E^\G h)^\infty(\bar x)],
\end{align*}
which gives $ E^\G h^\infty\le (E^\G h)^\infty$. Here the second and the last equality follow from monotone convergence, and the fourth follows from the interchange rule \cite[Theorem 14.60]{rw98}.


To prove the second claim, let $x\in L^0(\G)$ such that $(E^\G h)^\infty(x)\le 0$. By the first claim, $E^\G h^\infty(x)\le 0$ almost surely so, by the definition of a conditional integrand,
\[
(E^\G h^\infty)(x)=E^\G h^\infty(x).
\]
Since $h^\infty\geq 0$, we have $h^\infty(x)\le 0$ almost surely if and only if $E^\G h^\infty(x)\le 0$ almost surely.
\end{proof}

\section{Existence of solutions}\label{sec:exi}

This section gives the main result of the paper, which is a general existence result for nonconvex dynamic optimization problems. 
This is a nonconvex extension of the existence result in \cite[Theorem~2]{pp12}, 
which in turn extends well-known results in financial mathematics on the existence of optimal 
trading strategies under the no-arbitrage condition. Applications to optimal investment with 
nonconvex utilities will be given in Sections~\ref{sec:amf} and \ref{sec:alm} below.


Recall that a set-valued mapping $S:\Omega\tos\reals^n$ is {\em measurable} if $S^{-1}(O)\in\F$ for every open $O\subset\reals^n$. Here $S^{-1}(O):=\{\omega\in\Omega\,|\,S(\omega)\cap O\ne\emptyset\}$ is the inverse image of $O$.

\begin{theorem}\label{thm:exist}
Assume that there is a measurable set-valued mapping $N:\Omega\tos\reals^n$ such that $N(\omega)$ is a subspace for
each $\omega$,
\[
\{x\in\N \mid h^\infty(x)\le 0\}=\{x\in\N\,|\,x\in N\},
\]
and that $Eh(x+x')=Eh(x)$ for all $x,x'\in\N$ with $x'\in N$ almost surely. Then optimal solutions exist.
\end{theorem}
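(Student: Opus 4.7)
The plan is to reduce the problem to one that Theorem~\ref{thm:dp} handles, by restricting the objective to directions orthogonal to $N$. Since $N$ is measurable and subspace-valued, its pointwise orthogonal complement $N^\perp$ is also measurable with closed subspace values, so $\delta_{N^\perp}$ is a normal integrand. Define
\[
\tilde h(x,\omega):=h(x,\omega)+\delta_{N^\perp(\omega)}(x),
\]
which is a normal integrand bounded below by $m$ and satisfies $\tilde h\ge h$.

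To invoke Theorem~\ref{thm:dp} for $\tilde h$ I verify its horizon hypothesis via Lemma~\ref{lem:lin}. Since $\delta_{N^\perp}$ is proper convex lsc and positively homogeneous, Lemma~\ref{lem:rec2} gives $\tilde h^\infty\ge h^\infty+\delta_{N^\perp}$. If $x\in\N$ satisfies $\tilde h^\infty(x)\le 0$ a.s., then finiteness forces $x\in N^\perp$ a.s.\ and $h^\infty(x)\le 0$ a.s.; the latter, combined with the standing hypothesis, yields $x\in N$ a.s., whence $x\in N\cap N^\perp=\{0\}$. Thus $\{x\in\N\mid \tilde h^\infty(x)\le 0\}=\{0\}$, Lemma~\ref{lem:lin} supplies the stepwise condition $\tilde h_t^\infty(0,x_t)>0$ for $x_t\ne 0$, and Theorem~\ref{thm:dp} produces $\bar x\in\N$ minimizing $E\tilde h$ over $\N$. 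Since $E\tilde h(\bar x)<\infty$, necessarily $\bar x\in N^\perp$ a.s., and therefore $E\tilde h(\bar x)=Eh(\bar x)$.

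It remains to show $\inf_\N E\tilde h=\inf_\N Eh$, so that $\bar x$ is optimal for the original problem. For $y\in\N$ with $Eh(y)<\infty$, the natural candidate is $\tilde y:=y-y_N$, where $y_N(\omega)$ is the orthogonal projection of $y(\omega)$ onto $N(\omega)$; this $\tilde y$ lies in $N^\perp$ pointwise, and by the invariance hypothesis satisfies $Eh(\tilde y)=Eh(y)$ provided $y_N\in\N$, i.e.\ provided the pointwise orthogonal projection onto $N$ preserves adaptedness. This adaptedness step is the main obstacle; I expect to dispatch it by performing the projection inductively through the dynamic programming recursion, using \cite[Theorem~14.37]{rw98} to select at each stage $t$ an $\F_t$-measurable minimizer of $h_t(x^{t-1},\cdot,\omega)$ inside the relevant $t$-slice of $N^\perp$, with the invariance hypothesis guaranteeing that this stepwise restriction leaves the value of $E\tilde h$ unchanged.
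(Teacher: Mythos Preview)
Your verification that $\{x\in\N\mid\tilde h^\infty(x)\le 0\}=\{0\}$ via Lemma~\ref{lem:lin} is correct, and the overall strategy of adding an indicator of an orthogonal complement and then invoking Theorem~\ref{thm:dp} is exactly the paper's. The gap is precisely where you flag it, and your proposed fix does not close it.

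The difficulty is that $N$ is only $\F$-measurable, so the pointwise projection $y_N(\omega)=\pi_{N(\omega)}(y(\omega))$ need not be adapted, and the invariance hypothesis $Eh(x+x')=Eh(x)$ applies only when $x'\in\N$. Your suggested remedy---selecting at each stage an $\F_t$-measurable minimizer of $h_t(x^{t-1},\cdot,\omega)$ inside ``the relevant $t$-slice of $N^\perp$''---conflates two different tasks: the dynamic programming recursion produces an optimal $\bar x$ for $E\tilde h$, not a projection of a \emph{given} $y$; and the phrase ``$t$-slice of $N^\perp$'' has no canonical meaning, since $N^\perp(\omega)\subset\reals^{n_0}\times\cdots\times\reals^{n_T}$ need not split as a product. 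Nor can you invoke the invariance hypothesis step by step without knowing that the correction made at stage $t$ is itself the $t$-th component of some \emph{adapted} process lying in $N$.

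The paper resolves this by not using $N^\perp$ directly. Instead it invokes \cite[Lemma~5.3]{pen11c} to produce, for each $t$, an $\F_t$-measurable set-valued map $N_t$ characterized by: $x_t\in L^0(\F_t;N_t)$ if and only if $x_t$ is the $t$-th component of some $\tilde x\in\N$ with $\tilde x\in N$ and $\tilde x^{t-1}=0$. One then sets $\bar h:=h+\delta_\Gamma$ with the \emph{product} $\Gamma:=N_0^\perp\times\cdots\times N_T^\perp$. Projecting $x_0$ onto $N_0^\perp$ is now $\F_0$-measurable, and the defining property of $N_0$ supplies an adapted $\tilde x\in N$ that corrects $x_0$ into $N_0^\perp$ without changing $Eh$; iterating over $t$ yields an adapted $\bar x\in\Gamma$ with $Eh(\bar x)=Eh(x)$, hence $\inf_\N E\bar h=\inf_\N Eh$. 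The verification of Lemma~\ref{lem:lin} for $\bar h$ also uses this structure (one gets $x_0\in N_0\cap N_0^\perp=\{0\}$, then $x_1\in N_1\cap N_1^\perp=\{0\}$, and so on). The missing ingredient in your attempt is precisely this adapted time-decomposition of $N$.
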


\begin{proof} 
By \cite[Lemma~5.3]{pen11c}, there exist $\F_t$-measurable set-valued mappings $N_t$ such that $x_t\in L^0(\F_t;N_t)$ if and only if  $\tilde x_t=x_t$ for some $\tilde x\in\N$ with $\tilde x\in N$ and $\tilde x^{t-1}=0$. 
Let
\[
\bar h(x,\omega)=h(x,\omega)+\delta_{\Gamma(\omega)}(x),
\]
where $\Gamma=N^\perp_0\times\dots\times N^\perp_T$ and $N^\perp_t(\omega)$ denotes the orthogonal complement of $N_t(\omega)$.
 
Let us show that for every $x\in\N$, there exists $\bar x\in\N$ such that 
\begin{equation}\label{lehel}
Eh(x)=E\bar h(\bar x). 
\end{equation}
Let $\bar x_0$ be the projection of $x_0$ to $N^\perp_0$. Since $x_0$ and $N_0$ are $\F_0$-measurable, 
$\bar x_0$ is $\F_0$-measurable \cite[Exercise~14.17]{rw98}. By definition of $N_0$, 
there exists $\tilde x\in\N$ with $\tilde x\in N$ and $\tilde x_0= -(x_0-\bar x_0)\in N_0$. By assumption, $Eh(x)=Eh(x+\tilde x)$. Moreover, $(x+\tilde x)_0=\bar x_0\in N^\perp_0$. We may repeat the argument for $t=1,\dots,T$ to construct $\bar x\in\N$ with the claimed properties. Since $\bar h\ge h$ and \eqref{lehel} holds, we have that minimizers of $E\bar h$ minimize $Eh$. 
We can now complete the proof by applying Theorem~\ref{thm:dp} to $\bar h$.

It remains to check the conditions of Lemma \ref{lem:lin} for $\bar{h}$. 
Clearly, $\delta_{\Gamma}^{\infty}=\delta_{\Gamma}$. 
By Lemma~\ref{lem:rec2}, $\bar h^\infty\ge h^\infty+\delta_{\Gamma}$, so 
\[
\{x\in\N \mid \bar h^\infty(x)\le 0\} \subseteq \{x\in\N \mid h^\infty(x)\le 0,\ x\in\Gamma\}.
\]
An element $x$ of the set on the right has both $x_0\in N_0$ and $x_0\in N_0^\perp$ and thus, $x_0=0$. Repeating the argument for $t=1,\dots,T$, we get $x=0$ and thus,
\[
\{x\in\N \mid \bar h^\infty(x)\le 0\}=\{0\}.
\]
By Lemma~\ref{lem:lin}, the existence now follows from Theorem~\ref{thm:dp}.
\end{proof}

Let $h$ be a convex normal integrand and let $\{x\in\N \mid h^\infty(x)\le 0\}$ be a linear space.
Then the condition of Theorem~\ref{thm:exist} is satisfied with 
\[
N(\omega) = \{x\in\reals^n\,|\,h^\infty(x,\omega)\le 0,\ h^\infty(-x,\omega)\le 0\}.
\]
Indeed, this set is linear and, by \cite[Corollary~8.6.1]{roc70a}, 
$h(x+x',\omega)=h(x,\omega)$ for all $x'\in N(\omega)$. We thus recover the existence result of 
\cite[Theorem~2]{pp12}. 
Applications to nonconvex problems will be given in Sections~\ref{sec:amf} and~\ref{sec:alm} below.

\section{An application to mathematical finance}\label{sec:amf}

This section applies Theorem~\ref{thm:exist} to the problem of optimal investment in illiquid financial markets. We consider the discrete-time version of the model in \cite{gura}; see also \cite{doso}.

Let $Z_t$, $t=0,\ldots,T$ be an adapted sequence of $(d-1)$-dimensional random variables representing the marginal price of $d-1$ risky assets in an economy. We imagine that if ``very small'' amounts of asset $i$ were traded then this would take place at the price $Z_t^i$ at time $t$. We assume that the riskless asset in this economy has a price identically $1$ at all times.

As in Carassus and R{\'a}sonyi~\cite{nc}, we model trading strategies by predictable processes 
$\phi=(\phi_t)_{t=1}^T$, where $\phi_t$ denotes the portfolio of risky assets held over $(t-1,t]$. 
Thus $\Delta\phi_t=\phi_t-\phi_{t-1}$ is the portfolio of risky assets bought at time $t-1$ and $\phi_t=\phi_0+\sum_{i=1}^t\Delta\phi_i$. In perfectly liquid markets, the corresponding ``value process'' starting at initial capital $x$ is given by
\[
V^x_t= x+\sum_{i=1}^t\phi_i\cdot \Delta Z_i.
\]
In order to model illiquidity effects, we first rewrite the above as
\[
V_t^x= x - \sum_{i=1}^t\Delta\phi_i\cdot Z_{i-1} + \phi_t\cdot Z_t,
\]
with the convention $Z_{-1}=0$. As usual, the last term is interpreted as the liquidation 
value one would obtain by liquidating the portfolio at time $t$. Under illiquidity, it is more meaningful 
to track the position on the cash account without assuming liquidation at every $t$. We denote the cash 
position held over $(t-1,t]$ by $X^0_t$.

If illiquidity costs at time $t$ are given by an $\F_t$-normal integrand $G_t:\reals^{d-1}\times\Omega\to\mathbb{R}_+$, we have that the change in the cash position at time $t-1$ is 
\[
\Delta X^0_t = \Delta\phi_t\cdot Z_{t-1}-G_{t-1}(\Delta\phi_t)
\]
(recall that $\Delta\phi_t$ is the portfolio of risky assets bought at time $t-1$). Summing up, we get
\[
X^0_t:=X_0^0-\sum_{i=1}^t\Delta\phi_i\cdot Z_{i-1}-\sum_{i=1}^t G_{i-1}(\Delta\phi_i).
\]
Note that the $\Delta\phi_i$ are control variables here while $X^0_t$ is the controlled process. 
We assume that the functions $G_t$ are convex in the first argument and
\begin{eqnarray}
\lim_{\alpha\to\infty}\frac{G_t(\alpha z,\omega)}{\alpha} &\ge& -Z_t(\omega)\cdot z,\quad\forall z\in\reals^{d-1},
\label{kk1}\\
\lim_{\alpha\to\infty}\frac{G_t(\alpha z,\omega)}{\alpha} &>& -Z_t(\omega)\cdot z,\quad\forall z\notin\reals^{d-1}_-.\label{kk2}
\end{eqnarray}
These conditions hold in particular if liquidity costs are \emph{superlinear} in the volume; see Guasoni and  R{\'a}sonyi~\cite{gura}. The above condition allows also for {\em free disposal} of all securities in the sense that the total cost $S_t(z,\omega):=G_t(z,\omega)+Z_t(\omega)\cdot z$ is nondecreasing with respect to the partial order induced by $\reals^{d-1}_-$. This is quite a natural assumption e.g.\ in most securities markets where unit prices are always nonnegative.

We will consider an optimal investment problem of an agent whose financial position is described by a random endowment $W$. We allow both positive and negative values so $W$ can represent financial liabilities as well. The investor's risk preferences are described by a possibly nonconcave utility function $u:\mathbb{R}\to\reals$. We will assume that $u$ is upper semicontinuous, bounded from above and that
\begin{equation}\label{uu1}
\limsup_{\alpha\to\infty}\frac{u(\alpha w,\omega)}{\alpha} < 0\quad\forall w<0.
\end{equation}
For piecewise concave $u$, \eqref{uu1} clearly holds; see \cite{cp} for such a setting.

An application of Theorem~\ref{thm:exist} yields the following existence result; see Example~\ref{ex:proof} 
below for the proof.

\begin{theorem}\label{marquis} 
For an investor with initial capital $X_0^0=z$ and zero initial stock position 
$\phi^j_0=0$, $j=1,\ldots,d-1$ there exists an optimal strategy $\phi^*$ with 
$$\sup_{\phi} Eu(X_T^0(\phi)+W)=Eu(X_T^0({\phi^*})+W).$$
\end{theorem}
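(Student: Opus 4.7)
The plan is to apply Theorem~\ref{thm:exist}. First, reindex the strategies by $x_t := \Delta\phi_{t+1} \in L^0(\F_t;\reals^{d-1})$ for $t=0,\ldots,T-1$: predictability of $\phi$ becomes adaptedness of $x$, and with $\phi_0=0$ one recovers $\phi$ via $\phi_{t+1}=\sum_{i\le t}x_i$. The terminal cash reads $X_T^0=z-\sum_{t=0}^{T-1}[x_t\cdot Z_t+G_t(x_t)]$, so the utility maximization is equivalent to minimizing $Eh(x)$ over $x\in\N$, with
\[
h(x,\omega):=-u\!\left(z+W(\omega)-\sum_{t=0}^{T-1}[x_t\cdot Z_t(\omega)+G_t(x_t,\omega)]\right).
\]
Because each $G_t$ is a real-valued convex normal integrand (hence $x$-continuous) and $Z_t$ is $\F_t$-measurable, the argument of $u$ is Carath\'eodory; composing with the lsc function $-u$ yields an $\F$-normal integrand by \cite[Proposition~14.45(c)]{rw98}, and upper-boundedness of $u$ yields $h\ge-\sup u$.

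Next I analyze the horizon function. Writing $Y(x,\omega)$ for the argument of $u$ and using convexity of $G_t$ (so $G_t(\alpha z,\omega)/\alpha\uparrow G_t^\infty(z,\omega)$), one has
\[
\lim_{\alpha\to\infty}\frac{Y(\alpha x,\omega)}{\alpha}=-\sum_{t=0}^{T-1}[x_t\cdot Z_t(\omega)+G_t^\infty(x_t,\omega)].
\]
Condition \eqref{kk1} makes this limit nonpositive, and \eqref{kk2} makes it strictly negative whenever some $x_t\notin\reals^{d-1}_-$. In the strict case $Y(\alpha x)\to-\infty$ at linear rate; applying \eqref{uu1} with $w=-1$ gives $-u(s)/|s|\ge\delta>0$ for $s$ sufficiently negative, so $-u(Y(\alpha x))/\alpha$ has a strictly positive liminf and $h^\infty(x,\omega)>0$. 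Consequently the horizon set $\{h^\infty(\cdot,\omega)\le 0\}$ lies in $\Gamma(\omega):=\prod_{t=0}^{T-1}\reals^{d-1}_-$, whose lineality subspace $\Gamma\cap(-\Gamma)$ equals $\{0\}$.

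I would then invoke Theorem~\ref{thm:exist} with the subspace-valued map $N(\omega)\equiv\{0\}$: measurability, the subspace property, and the invariance $Eh(x+x')=Eh(x)$ for $x'\in N$ are all trivial, leaving only the equality $\{x\in\N:h^\infty(x)\le 0\}=\{0\}$ to be checked. This last condition is the main obstacle, since \eqref{kk1} may hold with equality for some nonzero $z\in\reals^{d-1}_-$, producing a free-disposal cone along which $Y(\alpha x)$ stays bounded below by $z+W$ and the previous argument only delivers $h^\infty\ge 0$. I plan to close the gap by sharpening the horizon computation along $\Gamma$: at finite $\alpha$, convexity of $G_t$ combined with \eqref{uu1} rules out $h^\infty(x,\omega)\le 0$ unless $x_t\equiv 0$ for every $t$; alternatively, following the pattern used in the proof of Theorem~\ref{thm:exist}, one augments $h$ with $\delta_{\Gamma^\perp}$ for a measurable selection $\Gamma(\omega)$ of the free-disposal cone and projects each $\N$-strategy onto $\Gamma^\perp$ without affecting $Eh$. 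Once Theorem~\ref{thm:exist} delivers a minimizer $x^*\in\N$, the strategy $\phi^*$ defined by $\phi^*_0=0$ and $\phi^*_{t+1}:=\sum_{i\le t}x^*_i$ is the required optimum.
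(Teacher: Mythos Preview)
Your proposal has two genuine gaps. First, the step ``$-u(Y(\alpha x))/\alpha$ has strictly positive liminf, hence $h^\infty(x,\omega)>0$'' runs in the wrong direction: for an arbitrary proper lsc function one always has $g^\infty(w)\le\liminf_{\alpha\upto\infty}g(\alpha w)/\alpha$, with equality only under property~\eqref{w}. The composite $h=(-u)\circ Y$ is neither convex nor one-dimensional, so \eqref{w} is not available and the radial computation furnishes no lower bound on $h^\infty$. Second, neither of your remedies for the free-disposal cone works. The set $\Gamma=\prod_t\reals^{d-1}_-$ is a pointed cone with lineality $\{0\}$, so there is no nontrivial subspace to project onto; the invariance $Eh(x+x')=Eh(x)$ required by Theorem~\ref{thm:exist} does not hold for $x'\in\Gamma$ because disposing of assets alters $Y$; and the proposed ``sharpening'' is unfounded---whenever \eqref{kk1} holds with equality at some $x_t\in\reals^{d-1}_-\setminus\{0\}$ one gets $Y(\alpha x)\ge z+W$ for all $\alpha$ and (for bounded $u$, say) $h^\infty(x)=0$, so $\{x\in\N:h^\infty(x)\le 0\}=\{0\}$ cannot be obtained in your formulation.

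The paper's proof avoids both issues by a decomposition you do not attempt. It embeds the problem in the framework of Section~\ref{sec:alm} (Example~\ref{ex:proof}): an auxiliary budget process $d$ and a terminal liquidation constraint $D_T=\{0\}$ are introduced, so that the integrand splits as $h(z,d,\omega)=V(d,\omega)+\delta_{C(\omega)}(z,d)$ with $C$ convex. The disutility $V$ is built from the one-dimensional $V_T=-u(-\cdot)$, which automatically satisfies~\eqref{w} by Example~\ref{ex:horizon}; Lemma~\ref{lem:rec2} then yields $h^\infty=V^\infty+\delta_{C^\infty}$ exactly, closing the first gap. For the second, conditions \eqref{kk1}--\eqref{kk2} become \eqref{cond:ca}, and together with the endpoint constraints $z_{-1}=0=z_T$ the argument in Example~\ref{ex:terminal} forces the recession set $\{z\in\N_{D^\infty}:S^\infty(\Delta z)\le 0\}$ to be $\{0\}$, whence Theorem~\ref{thm:exist} applies with $N=\{0\}$. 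The missing idea is thus to separate the nonconvex one-dimensional utility from the convex multivariate trading costs via the extra variable $d$ and the terminal constraint.
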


\begin{remark} {\rm A similar result has been obtained in Theorem 5.1 of \cite{gura}, in a continuous-time setting. However, in the discrete-time case, Theorem \ref{marquis} above goes much further. In \cite{gura} $u$ was assumed concave while we do not need this assumption here. Also, in \cite{gura} $|G_t(x)|$ was assumed to dominate (constant times) a power function $|x|^{\alpha}$ 
with $\alpha>1$ while here we only need \eqref{kk1} and \eqref{kk2}.} 
\end{remark}



\section{Models with general convex cost functions and portfolio constraints}\label{sec:alm}

This section extends the above existence result to a market model which does not assume the existence of
a cash account a priori. In a market without perfectly liquid asssets it is important to distinguish
between payments at different points in time which are described
by an adapted sequence $c=(c_t)_{t=0}^T$ of claims, each $c_t$ payable at time $t$. 
As in \cite{pen14}, we assume that trading costs are given by an adapted sequence $(S_t)_{t=0}^T$ of 
convex $\F_t$-normal integrands with $S_t(0,\omega)=0$. We also allow for 
portfolio constraints given by an adapted sequence $(D_t)_{t=0}^T$ of closed convex sets, each containing 
the origin. We assume that $D_T=\{0\}$, i.e.\ that the agent liquidates her portfolio at the terminal date.

We will describe the agent's preferences over sequences of payments by a normal integrand $V:\reals^{T+1}\times\Omega\to\ereals$. More precisely, the agent prefers an adapted sequence 
$d^1$ over another $d^2$ if
\[
EV(d^1)<EV(d^2),
\]
i.e.\ $V(d)$ expresses the \emph{disutility} of $d$.
The agent is indifferent between $d^1$ and $d^2$ if the two expectations are equal. We allow $V(\cdot,\omega)$ to be nonconvex but assume that it is bounded from below by an integrable random variable, $V(0,\omega)=0$ and that $V$ is nondecreasing in the sense that if $d^1-d^2\in\reals^{T+1}_-$ then $V(d^1,\omega)\le V(d^2,\omega)$.

\begin{assumption}\label{assV}
The functions $V(\cdot,\omega)$ satisfy \eqref{w} with $\bar w=0$ for all $\omega\in\Omega$, and
\[
V^\infty(d,\omega)\le 0\iff d\in\reals^{T+1}_-.
\]
\end{assumption}

\begin{remark}\label{ooo}
The above conditions on $V$ hold in particular under the extended Inada condition
\[
V^\infty(\cdot,\omega) = \delta_{\reals^{T+1}_-}\quad \forall\omega\in\Omega.
\]
Indeed, since
\[
0\leq V^\infty(d,\omega)\le \liminf_{\alpha\upto\infty}\frac{V(\alpha d,\omega)}{\alpha},
\]
it suffices to note that the equality holds on $\reals^{T+1}_-$ since $V(0,\omega)=0$ and 
$V(\cdot,\omega)$ is nondecreasing in the directions of $\reals^{T+1}_-$.
\end{remark}

The optimal investment problem can now be written as
\begin{equation}\label{alm}
\minimize\quad EV\left(S(\Delta z) + c\right)\quad\ovr\quad z\in\N_D,
\end{equation}
$\N_D:=\{z\in\N\,|\,z_t\in D_t\}$ denotes the set of feasible trading strategies, 
$z_{-1}:=0$ and $S(\Delta z)$ denotes the adapted process 
$(S_t(\Delta z_t(\omega),\omega))_{t=0}^T$ of trading costs. 
Here $z_t$ denotes the portfolio of assets held over $(t,t+1]$. 
In the notation of the previous section $z_t=(X^0_{t+1},\phi_{t+1})$. 

\begin{example}\label{ex:terminal0}
Problems where one is only interested in the level of terminal wealth fit \eqref{alm} with
\[
V(d,\omega)=
\begin{cases}
V_T(d_T,\omega) & \text{if $d_t\le 0$ for $t<T$},\\
+\infty & \text{otherwise},
\end{cases}
\]
where $V_T$ is a normal integrand on $\reals\times\Omega$. Such a function satisfies Assumption~\ref{assV} as soon as
\begin{equation}\label{VT}
\liminf_{\alpha\upto\infty}\frac{V_T(\alpha d_T,\omega)}{\alpha}>0\quad\forall d_T>0,\ \forall\omega\in\Omega.
\end{equation}
Indeed, $V(\cdot,\omega)$ is now the sum of the indicator function of $\reals^T_-\times\reals$ and $g_2(d):=V_T(d_T,\omega)$. Being a lsc proper function on the real line, $V_T(\cdot,\omega)$ automatically satisfies \eqref{w}; see Example~\ref{ex:horizon}. It follows that
\begin{align*}
g_2^\infty(d) &=\lim_{\delta\searrow 0}\inf_{\stackrel{\lambda\in(0,\delta)}{d'\in\uball(d,\delta)}}\lambda V_T(d_T'/\lambda)\\
&= \lim_{\delta\searrow 0}\inf_{\stackrel{\lambda\in(0,\delta)}{d_T'\in\uball(d_T,\delta)}}\lambda V_T(d_T'/\lambda)\\
&= V_T^\infty(d_T)\\
&= \liminf_{\alpha\upto\infty}\frac{V_T(\alpha d_T)}{\alpha}\\
&= \liminf_{\alpha\upto\infty}\frac{g_2(\alpha d)}{\alpha}
\end{align*}
so $g_2$ satisfies \eqref{w} with $w=0$ as well and \eqref{VT} means that $g_2^\infty(d,\omega)\le 0$ iff $d_T\le 0$. Lemma~\ref{lem:rec2} implies that $V^\infty(\cdot,\omega)=\delta_{\reals^T_-\times\reals}+g_2^\infty$, so \eqref{VT} implies Assumption~\ref{assV}.

Problem \eqref{alm} can now be written with explicit budget constraints as
\begin{equation*}
\begin{aligned}
&\minimize\quad & &EV_T(S_T(\Delta z_T)+c_T) \quad\ovr \quad z\in\N_D\\
&\st & &S_t(\Delta z_t)+c_t\le 0,\quad t=0,\ldots,T-1.
\end{aligned}
\end{equation*}
\end{example}

The existence result below involves an auxiliary market model given by
\begin{align*}
S^\infty_t(x,\omega)&=\sup_{\alpha>0}\frac{S_t(\alpha x,\omega)}{\alpha},\\
D^\infty_t(\omega) &=\bigcap_{\alpha>0}\alpha D_t(\omega). 
\end{align*}
By \cite[Theorem~3.21]{rw98}, $S^\infty_t(\cdot,\omega)$ is the horizon function of $S_t(\cdot,\omega)$ 
while by \cite[Theorem~3.6]{rw98}, $D^\infty(\omega)$ coincides with the 
{\em horizon cone} of $D_t(\omega)$ defined in \cite[Section~3.B]{rw98}.

\begin{theorem}\label{thm:alm}
If $\{z\in\N_{D^\infty}\,|\, S^\infty(\Delta z)\le 0\}$ is a linear space, then the infimum in 
\eqref{alm} is attained.
\end{theorem}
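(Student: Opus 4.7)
The plan is to cast \eqref{alm} in the form \eqref{p} by setting
\[
h(z,\omega):=V(S(\Delta z,\omega)+c(\omega),\omega)+\delta_{D(\omega)}(z),\qquad D:=D_0\times\cdots\times D_T,
\]
and to verify the hypotheses of Theorem~\ref{thm:exist}. Composition of the normal integrand $V$ with the Carath\'eodory map $(z,\omega)\mapsto S(\Delta z,\omega)+c(\omega)$, together with the normal integrand $\delta_D$, shows that $h$ is an $\F$-normal integrand bounded below by an integrable random variable, and minimization of $Eh$ over $\N$ is equivalent to \eqref{alm}.

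The heart of the argument is the identification
\[
\{z\in\N\mid h^\infty(z)\le 0\}=\{z\in\N_{D^\infty}\mid S^\infty(\Delta z)\le 0\}=:L.
\]
Writing $h=g+\delta_D$ with $g(z,\omega):=V(S(\Delta z,\omega)+c(\omega),\omega)$, Lemma~\ref{lem:rec2} gives $h^\infty\ge g^\infty+\delta_{D^\infty}$, so $h^\infty(z)\le 0$ already forces $z\in D^\infty$. The main obstacle is to extract $S^\infty(\Delta z)\le 0$ from $g^\infty(z)\le 0$. For any sequences $\lambda_k\downarrow 0$, $z_k\to z$, with $\alpha_k:=1/\lambda_k$, convexity of each $S_t$ gives $\liminf_k S_t(\alpha_k\Delta z_{k,t})/\alpha_k\ge s_t:=S_t^\infty(\Delta z_t)$; together with monotonicity of $V$ this yields
\[
\lambda_k V(S(\alpha_k\Delta z_k)+c)\ge V(\alpha_k(s-\varepsilon\mathbf{1})+c)/\alpha_k
\]
for every $\varepsilon>0$ and large $k$, and testing the definition of $V^\infty$ at the pair $(1/\alpha_k,\,(s-\varepsilon\mathbf{1})+c/\alpha_k)$ yields $\liminf\ge V^\infty(s-\varepsilon\mathbf{1})$. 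By Assumption~\ref{assV} this last quantity is strictly positive whenever $s\not\le 0$ (take $\varepsilon$ smaller than any positive component of $s$), contradicting $g^\infty(z)\le 0$. The reverse inclusion is checked directly with the test sequence $\lambda_k=1/k$, $z_k\equiv z$: $S(k\Delta z)\le kS^\infty(\Delta z)\le 0$ puts $V(S(k\Delta z)+c)\in[m,V(c)]$, and the universal lower bound $\lambda h(z'/\lambda)\ge\lambda m\to 0$ then forces $h^\infty(z)=0$.

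Now set
\[
N_1(\omega):=\{z\in D^\infty(\omega)\mid S_t^\infty(\Delta z_t,\omega)\le 0,\ t=0,\ldots,T\},\qquad N(\omega):=N_1(\omega)\cap(-N_1(\omega)).
\]
Measurability of $D^\infty$ and of the zero sublevel sets of the normal integrands $S_t^\infty$ makes $N$ a measurable subspace-valued mapping. By hypothesis $L$ is a linear space, so for every $z\in L$ we have $-z\in L\subseteq L^0(N_1)$, giving $L=\{z\in\N\mid z\in N\text{ a.s.}\}$ and matching the horizon condition of Theorem~\ref{thm:exist}. Finally, for $z'\in L^0(N)$ one has $S_t^\infty(\pm\Delta z'_t)\le 0$ a.s., and for the convex functions $S_t$ this implies $S_t(\Delta z_t+\Delta z'_t)=S_t(\Delta z_t)$ a.s.; together with $z+z'\in D$ (from $z\in D$ and $z'\in D^\infty$ by convexity of each $D_t$), this gives $h(z+z',\omega)=h(z,\omega)$ and hence $Eh(z+z')=Eh(z)$. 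Theorem~\ref{thm:exist} then produces the desired minimizer.
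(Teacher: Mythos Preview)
Your argument is essentially sound, but it takes a different route from the paper's. The paper does \emph{not} compose $V$ with the cost map; instead it introduces an auxiliary adapted variable $d$ (the expenditure process), rewrites \eqref{alm} as the minimization of
\[
h(z,d,\omega)=V(d,\omega)+\delta_{C(\omega)}(z,d),\qquad C(\omega)=\{(z,d)\mid S_t(\Delta z_t,\omega)+c_t(\omega)\le d_t,\ z_t\in D_t(\omega)\},
\]
and then applies Lemma~\ref{lem:rec2} with the \emph{convex} summand $\delta_C$ and the nonconvex summand $V(d)$, which satisfies \eqref{w} at $\bar w=0$ by Assumption~\ref{assV}. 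This yields the exact formula $h^\infty(z,d,\omega)=V^\infty(d,\omega)+\delta_{C^\infty(\omega)}(z,d)$ in one line, from which $\{h^\infty\le 0\}=\{d=0,\ z\in D^\infty,\ S^\infty(\Delta z)\le 0\}$ drops out immediately via Assumption~\ref{assV} and sublinearity of $S^\infty$.

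By contrast you keep $z$ as the only variable and push the horizon through the composition $V\!\circ(S(\Delta\cdot)+c)$ by hand, using monotonicity of $V$ and lower semicontinuity of $S^\infty$ along test sequences. This works, but two points deserve care. First, your Carath\'eodory justification for $h$ being a normal integrand is incomplete: the $S_t$ are merely convex $\F_t$-normal integrands, possibly $\ereals$-valued and hence not continuous in $z$; the paper's decomposition sidesteps this since both $V(d)$ and $\delta_C$ are manifestly normal integrands. Second, your reverse inclusion silently uses $V(c,\omega)<\infty$. After the horizon set is identified, the two proofs coincide: both set $N(\omega)=N_1(\omega)\cap(-N_1(\omega))$ with $N_1$ the pointwise cone $\{z\in D^\infty\mid S^\infty(\Delta z)\le 0\}$, and both verify the invariance $Eh(x+x')=Eh(x)$ via the lineality property of convex functions (cf.\ \cite[Corollary~8.6.1]{roc70a}). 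The auxiliary-variable trick is the main thing to take away: it cleanly separates the nonconvex loss $V$ from the convex market constraints so that Lemma~\ref{lem:rec2} applies directly, rather than requiring an ad hoc horizon computation for the composite.
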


\begin{proof}
In order to apply Theorem~\ref{thm:exist}, we write \eqref{alm} as
\[
\begin{aligned}
&\minimize\quad & & EV(d) \quad\ovr \quad z\in\N_D,\ d\in\N\\
&\st & &S(\Delta z)+c\le d,
\end{aligned}
\]
where $d_t$ denotes the total expenditure at time $t$ (alternatively, one could apply the results of \cite{pen99} on composite mappings). 
This fits \eqref{p} with $x=(z,d)$ and $h(x,\omega) = V(d) + \delta_{C(\omega)}(x)$, where
\[
C(\omega) = \{x\,|\, S_t(\Delta z_t,\omega)+c_t(\omega)\le d_t,\ z_t\in D_t(\omega)\}.
\]
By Lemma~\ref{lem:rec2}, Assumption~\ref{assV} and the fact that $0\in C(\omega)$ imply 
$h^\infty(x,\omega) = V^\infty(d,\omega) + \delta^\infty_{C(\omega)}(x)$. 
Since $\delta^\infty_{C(\omega)}=\delta_{C^\infty(\omega)}$, 
\cite[Exercise~3.12]{rw98} and \cite[Exercise~3.24]{rw98} give
\[
h^\infty(x,\omega) =
\begin{cases}
V^\infty(d,\omega) & \text{if $S^\infty_t(\Delta z_t,\omega)\le d_t$,\ $z_t\in D^\infty_t(\omega)$},\\
+\infty & \text{otherwise}.
\end{cases}
\]
Our assumptions on $V$ imply that $V^\infty(d,\omega)\le 0$ if and only if $d\in\reals^{T+1}_-$ so
\begin{align*}
\{x\in\N|\, h^\infty(x)\le 0\text{ a.s.}\}&=\{x\in\N|\, V^\infty(d)\le 0,\ z\in D^\infty,\ S^\infty(\Delta z)\le d\}\\
&=\{x\in\N|\, d\le 0,\ z\in D^\infty,\ S^\infty(\Delta z)\le d\}\\
&=\{x\in\N|\, d=0,\ z\in D^\infty,\ S^\infty(\Delta z)\le 0\},
\end{align*}
where the last equality follows from the fact that $-S^\infty(-\Delta z)\le S^\infty(\Delta z)$ 
(because $S^\infty_t(\cdot,\omega)$ is sublinear) and the assumption that 
$\{z\in\N_{D^{\infty}} |\, S^\infty(\Delta z)\le 0\}$ is linear. Defining
\[
L(\omega) = \{x\in\reals^n\,|\, d=0,\, z_t\in D^\infty(\omega),\ S^\infty_t(\Delta z_t,\omega)\le 0\}
\]
we thus have that the conditions of Theorem~\ref{thm:exist} are satisfied with 
$N(\omega)=L(\omega)\cap[-L(\omega)]$.
\end{proof}

The following example specializes Theorem~\ref{thm:alm} to optimization of terminal utility and market models with a cash account. 

\begin{example}\label{ex:terminal}
Consider again the setting of Example~\ref{ex:terminal0} and assume that there is a perfectly liquid asset, say 
asset $0$, such that, denoting $z=(z^0,\tilde z)$, 
\[
S_t(z,\omega) = z^0 + \tilde S_t(\tilde z,\omega)
\]
and $D_t(\omega)=\reals\times\tilde D_t(\omega)$ for $t=0,\ldots,T-1$ while still $D_T=\{0\}$. The problem can then be written as (fix an adapted $\tilde z$ and minimize over adapted $z^0$)
\begin{equation}\label{e:ca}
\begin{aligned}
&\minimize\quad & &EV_T\left(\sum_{t=0}^T\tilde S_t(\Delta\tilde z_t)+\sum_{t=0}^Tc_t\right)\quad\ovr\quad z\in\N_D.
\end{aligned}
\end{equation}
The linearity condition of Theorem~\ref{thm:alm} means (by Lemma \ref{lem:rec2}) that
\begin{equation}\label{wwh}
\{z\in\N\,|\, \Delta z^0+\tilde S^\infty(\Delta\tilde z)\le 0,\ \tilde z\in\tilde D^\infty,\ z^0_T=0\}
\end{equation}
is a linear space. This holds, in particular, if
\begin{equation}\label{cond:ca}
\tilde S_t^\infty\ge 0\quad\text{and}\quad\tilde S_t(\tilde z)^\infty> 0,\ \forall \tilde{z}\notin\reals^{\tilde J}_-.
\end{equation}
Indeed, the first inequality implies $\Delta z^0\le 0$ and then $z^0=0$ since $z^0_{-1}=0$, by assumption. 
Then, the second inequality implies $\Delta\tilde z_t\le 0$. Since $z_{-1}=0$ and $D_T=\{0\}$, 
by assumption, this can only hold if $z=0$.
\end{example}

The proof of Theorem~\ref{marquis} is now a simple application of the above example.

\begin{example}[Proof of Theorem~\ref{marquis}]\label{ex:proof}
When $V_T(c,\omega)=-u(-c,\omega)$, $\tilde S_t(\tilde z,\omega) = Z_t(\omega)\cdot\tilde z + G_t(\tilde z,\omega)$, $c_0=X_0^0$, $c_T=-W$ and $c_t=0$ for $t=1,\ldots,T-1$, 
$D_t:=\mathbb{R}^d$, $t=0,\ldots,T-1$, $D_T:=\{0\}$,
we can write problem \eqref{e:ca} as
\[
\begin{aligned}
&\maximize\quad & &Eu\left(X_0^0-
\sum_{t=0}^T[Z_t\cdot\Delta\tilde z_t + G_t(\Delta\tilde z_t)] + W\right)\quad\ovr\quad z\in\N_D.
\end{aligned}
\]
This is exactly the problem formulated in Section ~\ref{sec:amf} where the notation $\phi_t=\tilde z_{t-1}$ was used. Conditions \eqref{VT} and \eqref{cond:ca} now become the conditions on $G$ and $u$ given in Section~\ref{sec:amf}. Indeed, since $\tilde S_t(\cdot,\omega)$ are convex, \eqref{cond:ca} becomes \eqref{kk1} and \eqref{kk2}; see Example~\ref{ex:horizon}.
\end{example}

The linearity condition in Theorem~\ref{thm:alm} applies also to the frictionless case. Indeed, in the classical perfectly liquid market model, it becomes the classical {\em no-arbitrage condition}. In nonlinear unconstrained models, is becomes the {\em robust no-arbitrage condition} introduced by Schachermayer~\cite{sch4}; see \cite[Section~4]{pen12} for details. The linearity condition in Theorem~\ref{thm:alm} may hold even without no-arbitrage conditions. One has $\{z\in\N_{D^{\infty}}\,|\, S^\infty(\Delta z)\le 0\}=\{0\}$, for example, when $S$ is such that $S^\infty_t(z,\omega)>0$ for all $z\notin\reals^J_-$. Indeed, $S(\Delta z)\le 0$ then implies $\Delta z_t\le 0$ componentwise, which must hold as an equality since, by assumption, $x_{-1}=0$ and $D_T=\{0\}$. Such a condition holds e.g.\ in limit order markets where the limit order books always have finite depth.



\end{document}